\newcommand{\Kl}[1]{\left( #1 \right)}
\newcommand{\legendre}[2]{\left( \frac{#1}{#2} \right)}
\newcommand{\conj}[1]{\bar{#1}}
\newcommand{\Z}{{\mathbb Z}}
\newcommand{\N}{{\mathbb N}}
\newcommand{\Q}{{\mathbb Q}}
\newcommand{\R}{{\mathbb R}}
\newcommand{\C}{{\mathbb C}}
\newcommand{\Ha}{{\mathbb H}}
\newcommand{\be}{{\beta}}
\newcommand{\al}{{\alpha}}
\newcommand{\eps}{{\epsilon}}
\newcommand{\A}{{\mathfrak a}}
\def\ysum{\mathop{\sum \!{\vphantom{\sum}}^\prime }}
\newcommand{\psum}[2]{\ysum_{{#1}(\text{mod\,}{#2)}}}
\newtheorem{thm}{Theorem}
\newtheorem{lemma}[thm]{Lemma}
\theoremstyle{definition}
\DeclarePairedDelimiter\abs{\lvert}{\rvert}
\DeclarePairedDelimiter\norm{\lVert}{\rVert}
\let\oldabs\abs
\def\abs{\@ifstar{\oldabs}{\oldabs*}}
\let\oldnorm\norm
\def\norm{\@ifstar{\oldnorm}{\oldnorm*}}
\keywords{half-integral weight cusp forms, Maa{\ss} forms, ternary quadratic forms, Waring's problem} 
\subjclass[2010]{Primary 11F03, 11F30, 11P05}
\title[Fourier coefficients of half-integral weight cusp forms]{Fourier coefficients of half-integral weight cusp forms and Waring's problem}
\author{Fabian Waibel}
\begin{document}
\begin{abstract} 
Extending the approach of Iwaniec and Duke, we present strong uniform bounds for Fourier coefficients of half-integral weight cusp forms of level $N$. As an application, we consider a Waring-type problem with sums of mixed powers. 
\end{abstract} 
\maketitle
\section{Introduction}

A positive, integral, symmetric $k \times k$ matrix $A$ with even diagonal elements gives rise to a quadratic form $q(x):= \frac{1}{2} x^{t} A x$. It is a central problem of number theory to study the representation function 
\begin{align} \notag
r(q,n) := \# \{ x \in \Z^{k} \, | \, q(x) = n \}.
\end{align}
One way to do so is by examining the theta series 
\begin{align} \notag
\theta(q,z) := \sum_{x \in \Z^{k}} e(q(x)z) = \sum_{n = 0}^{\infty} r(q,n) e(nz)
\end{align}
which is a modular form of (generally) half-integral weight of level $N$, where $N$ is the level of $q$. To understand $r(q,n)$, decompose $\theta(q,z)$  into an Eisenstein series and a cusp form. To treat the cusp form contribution,  one may apply the results from the late eighties from Iwaniec \cite{Iw1}, Duke \cite{Du}, and Duke-Schulze-Pillot \cite{DS}. Let $f(z) = \sum_{n \geq 1} a(n)e(nz)$ be a holomorphic cusp form of half-integral weight  $k/2,\,k\geq 3$ for the group $\Gamma = \Gamma_0(N)$, normalized with respect to 
\begin{align}  \label{eq:1.0}
\langle f,g \rangle = \int_{\Gamma \backslash \Ha} f(z) \overline{g(z)}  y^{k} \frac{dx dy}{y^2}.
\end{align}
Then, it was shown in \cite{Du,Iw1} that, for squarefree $n$,  
\begin{align} \label{eq:2.0}
a(n) \ll n^{k/4-2/7+\eps}
\end{align}
provided that $f \in U^{\bot}$ for $k=3$, where $U$ is the subspace of theta functions of $S_{3/2}(N,\chi)$ of type $\sum_{n \geq 1} \psi(n) n e(tn^2z)$ for some real character $\psi$ and $4t \divides N$. 

The aim of this paper is threefold: we extend the bound \eqref{eq:2.0} to arbitrary $n$, we include forms of level $N$ with arbitrary nebentypus and improve the bound with respect to $N$. For the second point we need to  bear in mind that the Weil-Estermann bound does not necessarily hold for twisted Kloosterman sums for prime power moduli  
(cf. \cite[Example 9.9]{KL}). The main strategy follows the work of Duke and Iwaniec \cite{Du,Iw1} with the extensions of Blomer \cite{Bl1}. 

If $d$ divides a power of $x$, we write $d \divides x^{\infty}$, and we denote the squarefree kernel by $\textnormal{rad}(n)$. 

\begin{thm} \label{main1}
Fix an orthonormal basis $\{\varphi_{j}=\sum_{n\geq1} a_{j}(n) e(nz)\}_{j=1}^{d}$ of $S_{k/2}(N,\chi)$ for odd $k \geq 5$ and of $U^{\bot}$ for $k=3$. Then it holds for $n= t v^2 w^2$ with $t$ squarefree, $v \divides N^{\infty}$, $(w,N)=1$  and quadratic $\chi$  that %$\abs{a(tv^2w^2)} \leq \abs{a(tw^2} v^{k-1+\eps}$ and thus
\begin{align}  \notag
\sum_{j=1}^{d} \abs{a_j(n)}^2 \ll n^{k/2-1} \Kl{\frac{t^{3/7}v^{6/7}}{N^{2/7}(n,N)^{1/7}} + \frac{t^{3/8}v^{3/4}}{N^{1/8}(n,N)^{1/4}}+  \frac{v(n,N)}{N}+1} (nN)^{\eps}.
\end{align} 
For arbitrary $\chi$, the last term within the bracket changes to $\frac{v (n,N)}{N } \Kl{c_{\chi} \textnormal{rad}(c_\chi)}^{1/4}$, where $c_\chi$ is the conductor of $\chi$. 
\end{thm}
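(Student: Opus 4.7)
The plan is to apply the Petersson trace formula for the orthonormal basis $\{\varphi_j\}$ of $S_{k/2}(N,\chi)$ (respectively $U^{\bot}$ when $k=3$). Setting both Fourier indices equal to $n$, this writes $\sum_{j} \abs{a_j(n)}^2$ as a diagonal term of size $n^{k/2-1}$, which furnishes the constant $+1$ inside the bracket, plus an off-diagonal contribution
\[
n^{k/2-1} \sum_{4N \mid c} \frac{K_\chi(n,n;c)}{c}\, J_{k/2-1}\!\left(\frac{4\pi n}{c}\right),
\]
where $K_\chi$ denotes the half-integral weight Kloosterman sum twisted by the nebentypus. The entire task reduces to bounding this $c$-sum sharply in $n$ and $N$.

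I would split the range of $c$ at $c \asymp n^{1/2}$. In the tail $c \gg n^{1/2}$, the Bessel factor contracts and a trivial Weil-type estimate for $K_\chi$ accounts for the term $v(n,N)/N$. In the bulk range $c \ll n^{1/2}$, the essential input is the Sali\'e-type evaluation of $K_\chi(n,n;c)$: writing $n=tv^2w^2$ and using multiplicativity in $c$, one reduces matters to a short exponential sum whose frequency is governed by the squarefree part $t$, while the wild component $v$ enters through the admissible moduli $4N \mid c$. The Iwaniec--Duke manipulation, namely opening the Sali\'e sum, smoothing in $c$, and applying Weyl differencing, produces the $2/7$-exponent and accounts for the first bracketed term; a refinement in the style of Blomer \cite{Bl1} replaces Weyl differencing by a sharper treatment of the short sum and yields the $1/8$-exponent, which dominates in a complementary parameter regime.

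For $k=3$ the direct argument fails because $S_{3/2}(N,\chi)$ contains theta series that are genuinely larger than the bound predicts; projecting onto $U^{\bot}$ removes this obstruction, after which Duke's approach via the Shimura lift transfers the problem to integral weight forms, where the Kloosterman sum contribution is controlled by the Kuznetsov formula together with the spectral large sieve. The main obstacle is the nebentypus: when $\chi$ is not quadratic, the Weil--Estermann bound for twisted Kloosterman sums can fail at prime power moduli (cf.\ \cite[Example 9.9]{KL}), so one must perform a careful local analysis at the bad primes and track the loss through every step of the above argument. This loss enters only the tail regime and produces the additional factor $\Kl{c_\chi \textnormal{rad}(c_\chi)}^{1/4}$ on the diagonal-type term. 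Finally, optimizing the splitting point $c \asymp n^{1/2}$ and combining the contributions from all four regimes yields the four-term bound stated in the theorem.
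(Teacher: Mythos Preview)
Your outline misses the central mechanism of the proof and misidentifies the route for $k=3$.

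\textbf{The missing amplification step.} The paper does not apply the Petersson formula at level $N$ and then split the $c$-sum at $c\asymp n^{1/2}$. Instead it uses Iwaniec's trick of \emph{averaging over auxiliary levels}: one applies Petersson at each level $Q=pN$ for primes $p$ in a range $P<p\le 2P$, and then averages over $p$. This introduces a free parameter $P$ and an extra prime divisor of the modulus, which is precisely what makes the bilinear-form treatment of the Sali\'e sums effective. After expressing $J_{k/2-1}$ by its asymptotic and passing to partial sums $K_Q(x)$, one factors $c=qr$ with $r\mid(2nN)^\infty$, evaluates the Sali\'e sum over $q$ explicitly, and splits into dyadic boxes $A<a\le 2A$, $B<b\le 2B$. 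When both $A,B$ are large one applies Cauchy--Schwarz \emph{in the $p$-average} and then Weil on the resulting complete sums; when one of $A,B$ is small one estimates directly via Weil. The exponents $3/7$ and $3/8$ both arise from the \emph{same} argument at different balance points, and the final optimization is in $P$ (one takes $P\asymp n^{1/7}(n,N)^{2/7}N^{-3/7}$), not in a $c$-threshold. Your ``Weyl differencing'' description and the attribution of the $3/8$-term to a separate Blomer-type refinement do not match what actually happens; and without the level average there is no extra variable to Cauchy--Schwarz over, so the stated exponents are not reachable along the lines you describe.

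\textbf{The case $k=3$.} The paper does not transfer to integral weight via the Shimura lift and then invoke the spectral large sieve. The Shimura lift is used only at the very start, to strip off the factor $w$ coprime to $N$ via Deligne's bound. For the core estimate one embeds $S_{3/2}(N,\chi)$ into the space of weight-$3/2$ Maa{\ss} cusp forms by $f\mapsto y^{3/4}f$ and applies Proskurin's half-integral weight Kuznetsov formula with the explicit test function $\varphi(x)=c_0 x^{-7/2}J_{13/2}(x)$. Positivity of $\hat\varphi$ lets one drop the continuous spectrum and the eigenvalues $\lambda\neq 3/16$; the holomorphic contribution at weights $\ge 5/2$ is handled by the already-proved case $k\ge 5$. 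One then performs the \emph{same} level-averaging over $Q=pN$ on the Kloosterman side and feeds in the bound for $\sum_Q|K_Q(x)|$. Your proposed route through integral-weight Kuznetsov plus the spectral large sieve is a different argument and would need to be justified separately; as written it is not what produces the theorem.

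\textbf{Where the individual terms come from.} In the paper the term $v(n,N)/N$ arises from the moduli with $t\mid c$, estimated trivially via the twisted Weil bound (Lemma~\ref{Lemma2.2}); the factor $(c_\chi\,\mathrm{rad}\,c_\chi)^{1/4}$ for non-quadratic $\chi$ enters exactly here. The constant $+1$ is the residual $P\asymp (nN)^\eps$ in the optimization, not simply the Petersson diagonal (the diagonal becomes the $P$-term after level averaging). These attributions differ from yours.
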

%Theorem \ref{main1} is ultimately based on Iwaniec's method of bounding sums of half-integral weight Kloosterman sums. By a very different approach, based on Waldspurger's theorem and subconvexity, one can bound $a_j(n)$ by $\mathcal{O}(n^{k/4-5/16})$  cf. \cite[Corollary 2]{Bh} which is slightly better in terms of $n$. However, the implied constant depends quite strongly on $N$. For many applications involving families of cusp forms, such as the one presented below, Theorem \ref{main1} leads therefore to stronger results. 

We singled out the case of quadratic $\chi$ because this is the relevant case for quadratic forms and the main application that we proceed to present. It has been investigated by Wooley \cite{Wo} under which conditions on the exponents $k_j,\, j=1,\ldots,t$ the Diophantine equation 
\begin{align}  \label{eq:Wo}
x_{1}^2 + x_{2}^2+ x_{3}^3 + x_{4}^3 + \sum_{j=1}^{t} y_{j}^{k_j} = n
\end{align}
has solutions for all sufficiently large $n$. His proof is based, among other things, on a  result of Golubeva \cite[Theorem 2]{Go} which we can improve by  Theorem \ref{main1} as follows.
\begin{thm}  \label{Golubeva}
Let $P\neq 3$ be an odd prime, $(n,6P)=1$ and $n=tv^2$ with $t$ squarefree. Then 
\begin{align} \notag
n = x^2 + y^2 +6 P z^2
\end{align}
is solvable for $(x,y,z) \in \N^{3}$ if $P^{1+\eps} \leq \rm{min}(n^{1/17}v^{12/17},n^{1/11}v^{6/11},n^{1/3})$. This holds, in particular, if $nv^{28/3}>P^{17 + \eps}$. 
\end{thm}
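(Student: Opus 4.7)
The plan is the standard approach for ternary quadratic forms, now using Theorem~\ref{main1} to bound the cuspidal error. Set $q(x,y,z)=x^2+y^2+6Pz^2$ and consider its theta series $\theta_q \in M_{3/2}(\Gamma_0(24P),\chi)$ with $\chi$ the quadratic character attached to $q$. Decompose $\theta_q = E + C$ with $E$ Eisenstein and $C \in S_{3/2}(\Gamma_0(24P),\chi)$, so that $r(q,n) = r_E(n) + r_C(n)$. Solvability in $\N^3$ will follow from the dominance $r_E(n) > |r_C(n)|$, since representations with a vanishing coordinate are controlled by the binary subforms $x^2+y^2$ and $x^2+6Pz^2$ and are of strictly lower order under $(n,6P)=1$ and $n$ sufficiently large.

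\emph{Eisenstein lower bound.} Apply the Siegel mass formula for ternary forms together with Siegel's ineffective bound $L(1,\chi_{-n'})\gg_\eps n^{-\eps}$. Positivity of the local densities at $p\in\{2,3,P\}$ is verified from $(n,6P)=1$: at $p=P$ the form $q$ reduces to $x^2+y^2\pmod P$, which covers every nonzero residue class since $P$ is odd; at $p\in\{2,3\}$ it is a finite check using $(n,6)=1$ and $P\ne 3$. This yields
\[
r_E(n) \gg_\eps n^{1/2-\eps} P^{-1/2-\eps}.
\]

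\emph{Cusp form upper bound.} Apply Theorem~\ref{main1} with $k=3$ and $N=24P$. Since $(n,6P)=1$ forces $(n,N)=1$, in the notation of Theorem~\ref{main1} the $N^\infty$-part of $n$ equals $1$, the coprime part equals the present $v$, and the squarefree part is $t=n/v^2$. Before applying, the Eisenstein component $E$ is replaced by the projection of $\theta_q$ onto the sum of the Eisenstein subspace and $U$, so that $C\in U^\perp$; the additional unary-theta contribution to $r_E(n)$ is either zero or explicitly computable and absorbed into the main term. Expanding $C=\sum_j\langle C,\varphi_j\rangle\varphi_j$ and applying Cauchy--Schwarz,
\[
|r_C(n)|^2 \le \|C\|^2 \sum_j |a_j(n)|^2,
\]
where $\|C\|^2 \ll_\eps P^{1/2+\eps}$ by a Rankin--Selberg unfolding of the ternary theta series. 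Inserting Theorem~\ref{main1} yields
\[
|r_C(n)|^2 \ll n^{1/2+\eps} P^{1/2+\eps}\biggl(\frac{n^{3/7}}{v^{6/7}P^{2/7}} + \frac{n^{3/8}}{v^{3/4}P^{1/8}} + 1\biggr).
\]

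\emph{Conclusion.} Imposing $|r_C(n)|^2 \le \tfrac14 r_E(n)^2$ term by term translates the three summands above into the three inequalities $P^{1+\eps}\le n^{1/17}v^{12/17}$, $P^{1+\eps}\le n^{1/11}v^{6/11}$, and $P^{1+\eps}\le n^{1/3}$, respectively. The ``in particular'' assertion follows because $nv^{28/3}>P^{17+\eps}$, combined with the trivial bound $v^2\le n$, implies each of the three individually. The main technical obstacles will be obtaining the Eisenstein lower bound with the correct $P^{1/2}$ denominator via positivity of local densities at $\{2,3,P\}$, and carrying out the Rankin--Selberg computation that furnishes $\|C\|^2\ll P^{1/2+\eps}$; the two estimates must fit together so that the combined $P$-exponent produced by $r_E(n)^{-1}$ and $\|C\|$ is exactly what Theorem~\ref{main1} can absorb.
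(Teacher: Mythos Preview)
Your proposal follows the same overall strategy as the paper: decompose $\theta_q$, bound the Eisenstein part below via Siegel's formula and local densities, bound the cuspidal part above via Cauchy--Schwarz combined with $\|C\|^2\ll P^{1/2+\eps}$ and Theorem~\ref{main1}, and compare. Your numerics (the three exponents $1/17$, $1/11$, $1/3$, and the ``in particular'' deduction using $v^2\le n$) are correct and match the paper. The paper cites Golubeva for $\|C\|^2\ll P^{1/2+\eps}$ rather than doing a Rankin--Selberg unfolding itself, but this is the same bound.

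There is, however, a genuine gap in your treatment of the subspace $U$. You write that the unary-theta contribution is ``either zero or explicitly computable and absorbed into the main term'', but you do not verify either alternative. The second alternative is in fact dangerous: the theta functions in $U$ have the shape $\sum_m \psi(m)\,m\,e(tm^2z)$ with $4t\mid 24P$; since $(n,6P)=1$ forces $t=1$, the $n$-th coefficient can be of size $\asymp n^{1/2}$ whenever $n$ is a perfect square. This is \emph{larger} than your Eisenstein lower bound $r_E(n)\gg n^{1/2-\eps}P^{-1/2}$, so it cannot simply be ``absorbed'' without knowing that the projection coefficients onto $U$ are themselves small in $P$. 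The paper resolves this cleanly: because $N/4=6P$ is squarefree, Schulze-Pillot's criterion gives $\theta(\mathrm{gen}\,q)=\theta(\mathrm{spn}\,q)$, and hence the cuspidal projection of $\theta_q$ lies in $U^\perp$ outright---the $U$-contribution is identically zero, not merely controllable. You should invoke this argument (or an equivalent one) rather than leave the point open.
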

In \cite{Go}, the  bound is $nv^{12} > P^{21+\eps}$. For $k_i \in \N$ and $2 \leq k_1 \leq \ldots \leq k_t$ set 
\begin{align} \notag
\gamma(k) = \prod_{i=1}^{t} \Kl{1- \frac{1}{k_i} } \quad \text{and} \quad \tilde{\gamma}(k) = \Kl{1- \frac{1}{k_t} }  \prod_{i=1}^{t-2} \Kl{1- \frac{1}{k_i} }.
\end{align}
\begin{thm} \label{Wo} Assume the Riemann hypothesis for all $L$-functions associated with Dirichlet characters. Then, provided that $\gamma(k) < \frac{28}{39}$, all sufficiently large numbers $n$ are represented in the form of \eqref{eq:Wo}. The same conclusions hold without the assumption of the Riemann hypothesis if 
\begin{enumerate}[label=(\roman*)]
\item  $t \geq 2$ and $\tilde{\gamma}(k) < \frac{58}{81}$  or
\item $\gamma(k) < \frac{58}{81}$ and the exponents $k_1,\ldots,k_t$ are not all even.
\end{enumerate}
\end{thm}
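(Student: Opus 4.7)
My plan is to follow Wooley's approach in \cite{Wo} essentially verbatim, with Theorem \ref{Golubeva} replacing the input from \cite[Theorem 2]{Go}. The crucial reduction is the identity $x_3^3 + x_4^3 = 2P(P^2 + 3z^2)$ obtained by setting $x_3 = P + z$ and $x_4 = P - z$ for an odd prime $P$. Substituting this into \eqref{eq:Wo} transforms the problem into representing
\begin{equation*}
n' := n - 2P^3 - \sum_{j=1}^{t} y_j^{k_j}
\end{equation*}
in the ternary form $x_1^2 + x_2^2 + 6Pz^2$. By Theorem \ref{Golubeva}, such a representation exists whenever $(n', 6P) = 1$ and $P^{17 + \eps} \leq n' v^{28/3}$, where $v^2$ is the largest square dividing $n'$. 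Thus it suffices to exhibit, for each sufficiently large $n$, an odd prime $P \neq 3$ with $P \nmid n$ and a representable sum $m = \sum_{j} y_j^{k_j}$ such that $n - 2P^3 - m$ satisfies both hypotheses.

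I would first use a standard sieve to separate the counting of admissible pairs $(P, m)$ from the arithmetic constraints: the coprimality condition is arranged by fixing $m$ modulo $6$ and excluding finitely many bad primes $P$, at a cost of only a constant factor. The size condition, in the generic case where $n'$ has bounded squarefull part $v$, confines $P$ to the range $P \lesssim n^{1/17}$. The core of the proof then becomes a density estimate for representable sums $\sum y_j^{k_j}$ in intervals $[M,2M]$: this density is $\gg M^{\gamma(k) - o(1)}$, proved under RH via sharp character-sum bounds, or unconditionally with weaker exponents via Vaughan's mean value theorem together with Bombieri--Vinogradov. Combining the $\gg n^{1/17 - o(1)}$ available primes $P$ with this density and optimising over the dyadic range of $P$ and the residual size of $n - 2P^3 - m$ yields the threshold $\gamma(k) < 28/39$ under RH. In the unconditional case (ii), an odd exponent $k_i$ allows one to bypass the parity obstruction via a Chebotarev-type detour, yielding the weaker threshold $58/81$; in case (i), the hypothesis on $\tilde{\gamma}$ rather than $\gamma$ reflects that two exponents can be treated as ``free'' parameters and the corresponding factor $(1 - 1/k_{t-1})$ can be dropped in exchange for a more robust minor-arc estimate.

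The main obstacle is this density step in the unconditional setting, where one lacks sharp control over character sums along representable $m$'s to moduli $P$ as large as $n^{1/17}$. Wooley's original paper handles this with a careful combination of mean-value results and Bombieri--Vinogradov; since nothing in that machinery depends on the particular numerical value of the Golubeva exponent, the same argument transfers here without modification. The improvement encoded in Theorem \ref{Golubeva} --- the exponent $17$ in place of $21$, and the secondary exponent $28/3$ in place of $12$ --- enters only at the final optimisation, where it relaxes the balance of exponents and pushes the thresholds from Wooley's values down to $28/39$ and $58/81$. Verifying that the arithmetic of the sieve (especially the interaction between the condition $(n', 6P)=1$ and the divisibility $v^2 \mid n'$) does not erode the gain from Theorem \ref{Golubeva} is the one bookkeeping step that requires genuine care.
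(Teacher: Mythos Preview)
Your plan---follow Wooley's argument in \cite{Wo} verbatim and insert Theorem~\ref{Golubeva} in place of Golubeva's bound at the final numerical optimisation---is exactly what the paper does. Two caveats: to hit the exact thresholds $28/39$ and $58/81$ you must feed in the full three-part condition of Theorem~\ref{Golubeva} (in Wooley's notation, $NM^{12}>p^{17}$, $NM^{6}>p^{11}$, $N>p^{3}$), not merely the weaker ``in particular'' bound $nv^{28/3}>P^{17+\eps}$, and then solve each resulting inequality for $\gamma_0$ with Wooley's parameter $c=2+2\eps$ (RH) or $c=12/5+2\eps$ (unconditional); and your narrative of Wooley's internal machinery (the density exponent $M^{\gamma(k)}$, Bombieri--Vinogradov, a ``Chebotarev-type detour'' for odd exponents) does not match what actually happens in \cite{Wo}, though this is harmless since you correctly treat that paper as a black box.
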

The original bounds in \cite[Theorem 1.2]{Wo} are $\gamma(k) < 12/17$ with the assumption of the Riemann hypothesis, $\tilde{\gamma}(k) < 74/105$ for $(i)$ and $\gamma(k) < 74/105$ for $(ii)$. As a consequence, it follows that  every sufficiently large number $n$ is represented in the form
\begin{align} \notag
x_{1}^2+ x_{2}^2 + x_{3}^3 + x_{4}^3 + \sum_{j=1}^{t} x_{j}^{3t}  = n,
\end{align}
with odd $t \leq 81$, or in the form 
\begin{align} \notag
x_{1}^2+ x_{2}^2 + x_{3}^3 + x_{4}^3 + x_{5}^{8}+ x_{6}^{12}+x_{7}^{16}+x_8^{20} =n
\end{align} 
if the truth of the Riemann hypothesis is assumed. 
 
\textbf{Acknowledgments.} I would like to express my gratitude to Prof. Blomer for the many useful suggestions and remarks.

\section{Shimura's lift and Maa{\ss} forms}

We follow the exposition of \cite{Bl1}. For $0 \neq z \in \C$ and $v \in \R$  define $z^v$ by
\begin{align}  \notag
z^v = \abs{z}^v \text{exp}(iv  \text{ arg}(z)), \text{ where } \text{arg}(z) \in (-\pi,\pi]. 
\end{align}  
For a holomorphic function on the upper half plane $f: \Ha \to \C$ and $\gamma = \begin{pmatrix} a &  b \\ c  & d \end{pmatrix} \in \Gamma_0(4)$ set 
\begin{align}  \notag
f|[\gamma]_{k/2}(z) =  \Kl{\eps_{d}^{-1} \legendre{c}{d}}^{-k} (cz+d)^{-k/2} f(\gamma z), 
\end{align} 
where $\legendre{c}{d}$ is the extended Kronecker symbol (cf. \cite[p.\,442]{Sh}) and $\eps_d = \Kl{\frac{-1}{d}}^{1/2}$. From now on, $\chi$ will always denote a character mod $N$ and $4 \divides N$. For odd $k$, we denote the spaces of  modular forms and cusp forms of half-integral weight $k/2$ for $\Gamma_0(N)$  and transformation behavior $f|[\gamma]_{k/2}(z) = \chi(d) f(z)$ by $M_{k/2}(N,\chi)$ and $S_{k/2}(N,\chi)$. For $f,g \in S_{k/2}(\Gamma, \chi) $, the inner product is defined by  \eqref{eq:1.0}. For $(n,N) =1$, let $T(n): M_k(N,\chi) \to M_k(N,\chi)$ be the Hecke operator (cf. \cite[Chapter\,4.3]{Ko}). 

For $f=\sum_{n \geq 1} c(n)e(nz) \in S_{k/2}(N,\chi)$, $k \geq 3$ odd, $\varepsilon=(-1)^{(k-1)/2}$ and $t$ without square factors (other than 1) prime to $N$,  define $C_t(n)$  by the formal identity 
\begin{align} \notag
\sum_{n=1}^{\infty} C_t(n) n^{-s} = L(s-k/2+3/2,\chi_{4 \varepsilon t}\chi)\sum_{n=1}^{\infty} c(t n^2) n^{-s}.
\end{align}
Then $F_t(z) = \sum_{n=1}^{\infty} C_t(n) e(nz) \in M_{k-1}(N/2,\chi^{2})$ is called the $t$-Shimura lift. If $f$ is an eigenform for all Hecke operators $T(p^2),~p \nmid N$ with eigenvalues $\lambda_p$, then $F_t$, if it is not equal to $0$, is an eigenform for all $T_p, p \nmid N$ with the same eigenvalues, and it holds for $(n,N)=1$ that \cite[Corollary 1.8]{Sh} 
\begin{align}  \notag
C_t(n)= c(t) \cdot \lambda_n.
\end{align}
There exists an orthonormal basis of $U^{\bot}$ and of $S_{k/2}(N,\chi), k \geq 5$,   of simultaneous eigenforms for all $T(p^2), p \nmid N$. Consequently, if the $t$-Shimura lift of $f$ is cuspidal, it follows by Deligne's bound for integral-weight modular forms for $(w,N)=1$ that
\begin{align}  \label{eq:5}
\big| c (t w^2) \big|  = \Big| c(t) \sum_{d \divides w} \mu(d) \chi_{4 \eps t}\chi(d) d^{k/2-3/2} \lambda_{w/m} \Big| \leq \abs{c(t)} w^{k/2-1} \tau(w)^2. 
\end{align}
For $k \geq 5$ the Shimura lift is always cuspidal. However, for $k=3$ the $t$-Shimura lift is cuspidal for all squarefree $t$ if and only if $f \in U^{\bot}$, i.e. $f$ does not live in the subspace of theta functions. 

The theory of  Maa{\ss} forms with general weights was introduced by Selberg \cite{Se}. For $\gamma \in \Gamma_0(4)$ and $k \in \Z$ set 
\begin{align} \notag
f|[\gamma]_{k/2} (z) =  \Kl{\eps_{d}^{-1} \legendre{c}{d}}^{-k} e^{-i (k/2) \text{arg}(cz+d)}  f(\gamma z).
\end{align}
We call a function $f:~ \Ha \to \C$ an automorphic form of weight $k/2$ if it satisfies for  all $
\gamma  \in \Gamma$ the transformation rule 
\begin{align} \notag
f|[\gamma]_{k/2} (z) = \chi(d) f(z),  
\end{align}
and  $f(z) \ll y^{\sigma} + y^{1-\sigma}$ for some $\sigma >0$. A \textit{Maa{\ss} form}  is an automorphic form that is an eigenfunction of 
\begin{align} \notag
\Delta_{k/2} = - y^2 \Kl{ \frac{\partial^2}{\partial x^2} + \frac{\partial^2}{\partial y^2} } + i (k/2) y \frac{\partial}{\partial x},
\end{align}
with eigenvalue  $\lambda = s (1-s)$. We denote the space of such forms by $\mathcal{A}_s(\Gamma \setminus \Ha,k/2,\chi).$ Their inner product is defined by 
\begin{align}  \label{eq:4}
\langle f,g \rangle = \int_{\Gamma \backslash \Ha} f(z) \overline{g(z)} \frac{dx dy}{y^2}.
\end{align}
Every form $f$ in  $\mathcal{A}_s(\Gamma \setminus \Ha,k/2,\chi)$ has a Fourier expansion at the cusp $\infty$ given by
\begin{align}  \label{eq:2.2}
f(z) &=\rho^{+} y^s +  \rho^{-} y^{s-1} +  \sum_{n \in \Z, n \neq 0}
  \rho(n) W_{ \text{sgn}(n)k/4 ,s-1/2} (4 \pi \abs{n}y)e(nx),
\end{align}
where  $W_{\alpha,\beta}(z)$ denotes the standard Whittaker function \cite[p.\,295]{Ob}.  If the zero coefficient of $f \in \mathcal{A}_s(\Gamma \setminus \Ha,k/2,\chi) $ vanishes at every cusp, then it is called a \textit{Maa{\ss} cusp form} and the space of such forms is denoted by $\mathcal{C}_s(\Gamma \setminus \Ha,k/2,\chi)$.

\section{Proof of Theorem 1}

Let $\{\varphi_{j}=\sum_{n\geq1} a_{j}(n) e(nz)\}_{j=1}^{d}$ be an orthonormal basis of $S_{k/2}(N,\chi)$ for odd $k \geq 5$ and of $U^{\bot}$ for $k=3$. Set $n=tv^2w^2$ with $\mu^2(t)=1$, $v \divides N^{\infty}$ and $(w,N)=1$. The square part of $n$ coprime to $N$, $w$, can be easily handled by \eqref{eq:5} since  $\abs{a_j(n)}^2 \leq w^{k/2-1} \abs{a_j(tv^2)}^2$. Therefore, it is sufficient to prove that
\begin{align}  \notag
\sum_{j=1}^{d} \abs{a_j(n)}^2 \ll n^{k/2-1} \Kl{\frac{t^{3/7}v^{6/7}}{N^{2/7}(n,N)^{1/7}} + \frac{t^{3/8}v^{3/4}}{N^{1/8}(n,N)^{1/4}}+  \frac{v(n,N)}{N}+1} (nN)^{\eps}
\end{align}
for $n=tv^2$, with $\mu^2(t)=1$ and $v$ arbitrary. 

The proof  follows the Iwaniec-Duke approach very closely and we assume some familiarity with the article \cite{Iw1}. For $k\geq 5$, we directly apply the Petersson formula while for $k=3$, we first embed the weight $3/2$ cusp forms into the space of Maa{\ss}  cusp forms of weight $3/2$ via $f(x+iy) \mapsto y^{3/4}f(x+iy)$ and then apply the Kuznetsov formula. The Petersson formula for half-integral weights states that \cite[p.\,89]{Sar}
\begin{align} \notag
\frac{\Gamma(k/2-1)}{(4 \pi n)^{k/2-1} } \sum_{j=1}^{d} \abs{a_j(n)}^2 = 1+2 \pi i^{-k/2} \sum_{N \divides c} c^{-1} J_{k-1}\Kl{\frac{4 \pi n}{c}} K_{\chi}^{k}(n,n,;c),
\end{align}
where $J_{k/2-1}$ is the Bessel function of order $k/2-1$ and 
\begin{align} \label{eq:09}
K_{\chi}^{k}(m,n;c) = \psum{d}{c} \eps_d^{-k} \chi(d) \legendre{c}{d} e\Kl{\frac{md + n\conj{d}}{c}}
\end{align} 
is a twisted Kloosterman sum. If $f(z)$ is a normalized cusp form for $\Gamma_0(N)$ with respect to \eqref{eq:1.0}, then $[\Gamma_0(Q):\Gamma_0(N)]^{-1/2}f(z)$ is a normalized cusp for $\Gamma_0(Q)$ provided that $N \divides Q$. Instead of applying the Petersson formula for the level $N$, we use it for  higher levels $Q= pN$ with primes $p \in \mathcal{P}=\{p\, | \, P< p \leq 2P, p \nmid 2nN \}$. Since $[\Gamma_0(pN):\Gamma_0(N)] \leq p+1$, this yields (cf. \cite[p. 400]{Iw1})
\begin{align}  \label{eq:008}
 \sum_{j=1}^{d} \abs{a_j(n)}^2 \ll n^{k/2-1} \Bigg( P + \sum_{p \in \mathcal{P}} \Bigg| \sum_{(pN) \divides c} c^{-1} K_{\chi}^{k}(n,n;c) J_{k/2-1} \Kl{\frac{4 \pi n}{c} } \Bigg| \text{log}P \Bigg),
\end{align}
where we choose $P > 1+ (\text{log} 2nN)^2$ to ensure that $\# \mathcal{P} \asymp P (\text{log}P)^{-1}$. After expressing the Bessel function by means of its asymptotic formula and applying partial summation, it remains to find a bound for sums of the type $\sum_{Q \in \Q} |K_Q(x)|$, where
\begin{align}  \label{eq:08}
K_Q(x) := \sum_{c \leq x,\, Q \divides c} c^{-1/2} K_{\chi}^{k}(m,n;c) e\Kl{\frac{2\nu n}{c}}
\end{align}
with $-1 \leq \nu \leq 1$ and $Q \in \mathcal{Q} = \{pN \, | \, p \in \mathcal{P} \}$. 

First, we factor the modulus $c$ into $qr$, where $q$ is coprime to $2nN$ and $r \divides (2nN)^{\infty}$. This way, \eqref{eq:09} decomposes into a Kloosterman sum of modulus $r$ and a Salié sum of modulus $q$ which is explicitly computable. Very similar to \cite[Lemma 6]{Iw1}, we obtain
\begin{align}  \label{eq:07}
K_{\chi}^{k}(n,n;c) = &q^{1/2} \sum_{\substack{s (\textrm{mod } r/2)\\ 2 \nmid s}} \eps_s^{-2k} f_r(2s,\chi) \left[(1+i^s)\legendre{nr}{q} + (1-i^s)\legendre{-nr}{q} \right]  \\ \notag
&\sum_{ab=q} e\Kl{2n \Kl{\frac{\overline{ar}}{b}- \frac{\overline{br}}{a} + \frac{s \overline{ab}}{r}}}.
\end{align}
The main difference is that 
\begin{align} \notag
f_r(2s,\chi) = \sum_{\substack{d (\textrm{mod } r)\\ d+\conj{d}\equiv 2s (\text{mod}\,r)}} \legendre{r}{d} \chi(d). 
\end{align}

\begin{lemma} \label{Lemma2.2}
For quadratic $\chi$, one has the following bound
\begin{align} \notag
\abs{K_{\chi}^{k}(n,n;c)} \leq  \tau(c) (n,c)^{1/2} c^{1/2},
\end{align} 
while, for arbitrary  $\chi$ one gets an additional factor of $(c_\chi \textnormal{rad}(c_\chi))^{1/4}$ on the right-hand side. 
\end{lemma}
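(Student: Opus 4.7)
The plan is to follow Iwaniec's proof of Lemma 6 in \cite{Iw1} closely, tracking the character $\chi$ through the computation. The starting point is the explicit formula \eqref{eq:07}, which factors $K_\chi^k(n,n;c)$ via $c=qr$, $(q,2nN)=1$, $r \divides (2nN)^{\infty}$, into a $q$-part of Salié type and an $r$-part that carries the character twist. I would bound these two parts separately and multiply.

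For the $q$-part, the factor $q^{1/2} \sum_{ab=q} e\bigl(2n(\overline{ar}/b - \overline{br}/a + s\overline{ab}/r)\bigr)$ is the standard explicit evaluation of a Salié sum of modulus $q$, and hence $\ll \tau(q)(n,q)^{1/2} q^{1/2}$ by the classical Salié bound; since $(q,N)=1$, the character plays no role here. For quadratic $\chi$, the twist $\chi(d)\left(\frac{r}{d}\right)$ appearing in $f_r(2s,\chi)$ is a real character, so the $r$-part essentially reduces to Iwaniec's original setting. The congruence $d + \conj{d} \equiv 2s \pmod{r}$ amounts to the quadratic equation $d^2 - 2sd + 1 \equiv 0 \pmod{r}$, which has at most $\tau(r)$ solutions, so $|f_r(2s,\chi)| \leq \tau(r)$. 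A careful arrangement of the outer sum over $s$, together with $|1 \pm i^s| \leq \sqrt{2}$, yields an $r$-part bound of order $\tau(r)(n,r)^{1/2}r^{1/2}$. Multiplying the two factors then produces the desired bound $\tau(c)(n,c)^{1/2}c^{1/2}$.

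For arbitrary $\chi$ the argument above breaks down because $\chi(d)\left(\frac{r}{d}\right)$ is no longer real and $f_r(2s,\chi)$ admits no such clean evaluation. I would split $r = r_1 r_2$ with $c_\chi \divides r_1^{\infty}$ and $(r_2, c_\chi)=1$, handling $r_2$ as in the quadratic case, and treating $r_1$ via Gauss-sum and Postnikov-type manipulations for character sums to prime power moduli. This is the main obstacle: as noted in the introduction via \cite[Example 9.9]{KL}, the usual Weil-Estermann bound can fail for twisted Kloosterman sums at prime power moduli, so one must settle for weaker $p$-adic stationary-phase estimates. The extra factor $(c_\chi \,\textnormal{rad}(c_\chi))^{1/4}$ arises precisely as the loss from bounding incomplete character sums of length $c_\chi^{1/2}$ against moduli of this size, and tracing this loss carefully through the decomposition of $f_{r_1}$ is the most delicate point of the proof.
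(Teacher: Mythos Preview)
Your route through the explicit formula \eqref{eq:07} is not the one the paper takes, and it carries a real gap. The decomposition in \eqref{eq:07} writes $c=qr$ with $(q,2nN)=1$ and $r\mid(2nN)^\infty$; it is designed to feed into the sum over $c$ in $K_Q(x)$, not to bound a single Kloosterman sum. With this splitting $(n,q)=1$, so the entire factor $(n,c)^{1/2}$ has to come from the $r$-part. Your claim that the congruence $d^2-2sd+1\equiv 0\pmod r$ has at most $\tau(r)$ solutions is false in general: since $r\mid(2nN)^\infty$, $r$ is typically a high prime power, and the number of solutions depends on $(s^2-1,r)$ and can be much larger than $\tau(r)$. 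Even granting the trivial bound $\sum_{s}|f_r(2s,\chi)|\le\phi(r)$, you obtain only $q^{1/2}\tau(q)\,r$, whereas the target is $q^{1/2}\tau(q)\,r^{1/2}(n,r)^{1/2}\tau(r)$; for $r$ a large power of a prime dividing $N$ but not $n$ this fails badly. The promised ``careful arrangement of the outer sum over $s$'' is exactly the missing idea, and there is no evident source for the saving $r^{1/2}/(n,r)^{1/2}$ along this route.

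The paper instead uses the multiplicative splitting $c=rq$ with $r\mid 2^\infty$ and $(q,2)=1$, so that the twisted Kloosterman sum factors as in \eqref{eq:9} into a half-integral weight Kloosterman sum modulo the $2$-power $r$ and an ordinary twisted Kloosterman sum $S_{\chi_q}(n\bar r,n\bar r;q)$ modulo the odd part. The odd part is then bounded directly by the results of Knightly--Li \cite[Theorem 9.3 and Propositions 9.4, 9.7, 9.8]{KL}: for real $\chi$ the conductor of $\chi_q(\cdot)\bigl(\tfrac{\cdot}{q}\bigr)$ is squarefree, giving the Weil bound $\tau(q)(n,q)^{1/2}q^{1/2}$, while for arbitrary $\chi$ one picks up the extra factor $(c_\chi\,\mathrm{rad}(c_\chi))^{1/4}$ from \cite[Theorem 9.3]{KL}. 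The $2$-power part is handled by adapting Iwaniec's $p$-adic stationary-phase argument \cite[Lemmas 12.2--12.3]{Iw2} to the present theta-multiplier, yielding $|K_\chi^k(n,n;2^\alpha)|\le\tau(2^\alpha)(n,2^\alpha)^{1/2}2^{\alpha/2}$. This decomposition is what makes the $(n,c)^{1/2}$ factor appear naturally, and it avoids the unbounded sum over $s$ entirely.
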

\begin{proof} 
If we split the sum for $c=r q, r \divides 2^{\infty}, (2,q)=1$ we obtain 
\begin{align}  \label{eq:9}
K_{\chi}^{k}(n,n;c) = K_{\chi_r}^{k-q+1}(n\bar{q},n\bar{q};r)  S_{\chi_{q}} \Kl{ n \bar{r},   n \bar{r} ,q },
\end{align}
where $\chi_r$ and $\chi_q$ are characters modulo $r$ and $q$ respectively and the latter sum 
\begin{align} \notag
S_{\chi}(n,n;q)= \psum{d}{q} \chi(d) \Kl{\frac{d}{q}} e \Kl{\frac{n (d+ \conj{d})}{q}}
\end{align}
is a Kloosterman sum twisted by a character. For arbitrary $\chi$, we apply \cite[Theorem 9.3]{KL} and get $\abs{S_{\chi}(n,n;q)} \leq  \tau(q) (n,q)^{1/2} q^{1/2} (q_\chi \textnormal{rad}(q_\chi))^{1/4}$. Since the conductor of a real character with odd modulus is always squarefree, we obtain the Weil bound for real $\chi$ by applying \cite[Proposition 9.4, 9.7 and 9.8]{KL}, i.e. $\abs{S_{\chi}(n,n;q)} \leq \tau(q ) (n,q)^{1/2} q^{1/2}$. To bound the first term on the right-hand side of \eqref{eq:9}, we modify \cite[Lemma 12.2 and Lemma 12.3]{Iw2}. Therefore, we set $r=2^{\alpha}$ and assume that $\al \geq 4$ to ensure that $\eps_r = \eps_a$ for $r = a + b\, 2^{\be}$, where $\N \ni \be = \frac{\al}{2}$ or $\frac{\al-1}{2}$ respectively. By following the argument of Iwaniec very closely, we obtain
\begin{align} \notag
|K_{\chi}^{k}(n,n;2^{\al})| \leq 2^{\be} M,
\end{align}
where $M$ is the number of solutions modulo $2^{\be}$ of %a quadratic congruence mod $2^{\be}$, namely 
$-n a^2 + Ba +n \equiv 0~ \text{mod}\, 2^{\be}$ for $B$ defined as in \cite[Lemma 12.2 and Lemma 12.3]{Iw2}. To bound $M$, we proceed as in \cite[Lemma 9.6, Proposition 9.7 and Proposition 9.8]{KL} obtaining $|K_{\chi}^{k}(n,n;r)| \leq \tau(r) (n,r)^{1/2}r^{1/2}$. 
\end{proof}
We split $K_Q(x)$ according to whether $t \divides c$. By applying Lemma \ref{Lemma2.2} we get, for quadratic $\chi$, that 
\begin{align} \notag
\abs{K_{[t,Q]}(x)} &\ll    \frac{x(t,Q)(n,[t,Q])^{1/2}}{tQ} \tau(tQ) (xn)^{\eps} \\ \notag
&\leq   \frac{x(t,Q)(v^{2},Q/(Q,t))^{1/2}}{t^{1/2}Q} \tau(tQ) (xn)^{\eps}  \leq  \frac{x v (n,Q)}{n^{1/2}Q} \tau(tQ) (xn)^{\eps}
\end{align} 
since $(t,Q)^{2} (v^{2},Q/(Q,t))$ divides both $Q^{2}$ and $n^{2}$. In particular, one has
\begin{align}  \label{eq:1}
\sum_{Q \in \mathcal{Q}} \abs{K_{[t,Q]}(x)} \ll xv(n,N) n^{-1/2}N^{-1} (xnN)^{\eps}.
\end{align}
For general $\chi$, we get an additional factor of  $(c_{\chi}\text{rad}(c_{\chi}))^{1/4}$ on the right-hand side. The remaining part of $K_Q(x)$ can be reduced to partial sums of the type
\begin{align} \notag 
K_Q^{\star}(y) = \sum_{\substack{y < c \leq 2y \\ t \nmid c, \,Q \divides c}} c^{-1/2} K_{\chi}^{k}(n,n;c)e\Kl{\frac{2 \nu n}{c}}
\end{align}
with $4 \leq y \leq x$. There are $\mathcal{O}(\textnormal{log}(x))$ such partial sums. For even $t$, we trivially estimate $|K_{[t/2,Q]}(x)|$ and assume that $K_Q^{\star}(y)$ runs over $c$ with $\frac{t}{2} \nmid c$ to ensure that  $n/(n,r)$ is not a perfect square. By \eqref{eq:07} we conclude that 
\begin{align}  \label{3.40}
K_Q^{\star}(y) = \sum_{r \in \mathfrak{R}} r^{-1/2} \sum_{\substack{s (\textrm{mod } r/2)\\ 2 \nmid s}} \eps_s^{-k} f_r(2s) \left[(1+i^s) F_{r,s}^{+}(p) + (1-i^s)F_{r,s}^{-}(p) \right] ,
\end{align}
where $\mathfrak{R} = \{r\, ; \,  N \divides r \divides (2nN)^{\infty} ,\,  t \nmid r \}$ and 
\begin{align}  \label{eq:3.3}
F_{r,s}^{\pm}(p) = \mathop{\sum \sum}_{\substack{y < abr \leq 2y \\ (a,b)=1, p \divides ab}} \legendre{\pm nr}{ab}e\Kl{2n \Kl{\frac{\overline{ar}}{b}- \frac{\overline{br}}{a} + \frac{s \overline{ab}}{r} + \frac{\nu}{abr}}}
\end{align}
with $(ab,2nN)=1$. We treat $F_{r,s}^{\pm}(p)$ according to the values of $a$ and $b$ and  split it into dyadic ranges  $A < a \leq 2A$ and $B < b \leq 2B$ with $y < rAB \leq 2y$ and $A,B \geq \frac{1}{2}$ which we denote by $F(A,B;p)$. 

For either $A$ or $B$ small, we apply the Weil bound for the Kloosterman sum and estimate trivially. Following \cite[p.396]{Iw1} word by word, we get 
\begin{align}   \label{eq:3.5}
F(A,B;p) \ll \Kl{1+\frac{n}{y}} \sum_{\substack{B < b \leq 2B \\ (b,2nN)=1}} \bigg| \sum_{\substack{A_1 < a \leq A_2 \\ (a,b)=1} } \legendre{\pm nr}{a} e\Kl{2 n m \frac{\conj{a}}{br} } \bigg|,
\end{align}
with $m$ defined by $m p_b \equiv r \bar{r} + 1 + s b \bar{b} (\textnormal{mod\,}br)$ and $A_1,A_2$ such that $Ap_b  = A_1 < A_2 \leq 2 A p_b$, where $p_b := p / (b,p)$. Set $\delta_1 = \frac{n}{(n,r)}$ and $\delta_2 = \frac{r}{(n,r)}$. At this point, we cannot proceed as in Iwaniec \cite[Section 5]{Iw1} because $8 \divides \delta_2$ is generally not satisfied. To solve this, we distinguish three cases:
\begin{itemize}
\item $ 2 \nmid \delta_1$. Set $\Delta_1 = \delta_1$ and $\Delta_2 = 16 \delta_2$.
\item $\text{ord}_2(\delta_1)=1$ or $2$. Set $\Delta_1 = 2^{-\text{ord}_2(\delta_1)} \delta_1 $ and $\Delta_2 = 2^{2+\text{ord}_2(\delta_1)}\delta_2$. 
\item $8 \divides \delta_1 $. Set $\Delta_1 = \delta_1$ and $\Delta_2 = \delta_2$. 
\end{itemize}
In each case $\Delta_1$ and $\Delta_2$ satisfy that  $\legendre{ \pm nr}{a} = \legendre{\pm \Delta_1 \Delta_2}{a}$, either $8 \divides \Delta_1$ or $8 \divides \Delta_2$ and $\Delta_1, \Delta_2$ and $b$ are pairwise coprime. Set $2 \frac{n}{r} = 2^{j} \frac{\Delta_1}{\Delta_2}$, where $j=5$, $j=3 + 2 \, \text{ord}_2(\delta_1)$ or $j=1$ according to the corresponding case. Thus, the innermost sum of \eqref{eq:3.5} is equal to 
\begin{align} \notag
\sum_{a} := \sum_{A_1 < a \leq A_2 } \legendre{\pm \Delta_1 \Delta_2}{a} e \Kl{2^{j} m\frac{\Delta_1 \conj{a}}{\Delta_2 b} }.
\end{align}
By applying \cite[$(3.14)$]{Iw1} it follows for $D=\Delta_1 \Delta_2 b$ that
\begin{align} \notag
\bigg| \sum_{a} \bigg|  \leq \sum_{1 \leq \abs{d} \leq D/2} \frac{1}{2 \abs{d}} \Bigg| \sum_{x (\text{mod}D)} \legendre{\pm \Delta_1 \Delta_2}{x} e \Kl{2^{j} m\frac{\Delta_1 \conj{x}}{\Delta_2 b} + \frac{dx}{D}} \Bigg|.
\end{align}
The sum modulo $D$ can be factored into three sums in the same manner as in \cite[p.\,396]{Iw1}. Note that $\Delta_1$ is not a perfect square because there exists an odd prime divisor of $t$ which, by definition of $\mathfrak{R}$, does not divide $r$. Therefore, $x \mapsto \Kl{\frac{\Delta_1}{x}}$ is not the trivial character. By following Iwaniec step by step and making use of $(n,r) ^{-1} \leq (n,N)^{-1}$ since $N \divides r$, we get 
\begin{align}  \label{eq:3.11}
F(A,B;p) \ll B^{3/2} \Kl{1+ \frac{n}{y}} (nr)^{1/2} (n,N)^{-1} \tau^2(r) \text{ log}(ny)
\end{align}
and
\begin{align}  \label{eq:3.12}
F(A,B;p) \ll A^{3/2} \Kl{1+ \frac{n}{y}} (nr)^{1/2} (n,N)^{-1} \tau^2(r) \text{ log}(ny).
\end{align}

If both $A$ and $B$ are large, we  make use of the flexibility gained through the averaging over the levels. We want to estimate  
\begin{align} \notag
F_P(A,B) = \sum_{p \in \mathcal{P}} |F(A,B;p)|. 
\end{align}
Setting $\lambda_P := \text{sgn} F(A,B;p)$ we get%\footnote{As before, the sum always runs over $a,b$ such that $(a,b)=1$ and $(ab,2nN)=1$} 
\begin{align} \notag
F_P(A,B) = \sum_{\substack{A<a\leq 2A \\ y < abr \leq 2y}} \sum_ {\substack{B<b\leq2B \\ (a,b)=1}} \sum_{\substack{P < p \leq 2P \\ p \divides ab}} \lambda_P \legendre{\pm nr}{ab} e\Kl{2n \Kl{\frac{\overline{ar}}{b}- \frac{\overline{br}}{a} + \frac{s \overline{ab}}{r} + \frac{\nu}{abr}}}.
\end{align}
To bound this, we follow \cite[Section 6]{Iw1} step by step. First, we split the sum according to whether $p \divides a$ or $p \divides b$. In each case we interchange the sums, apply Cauchy-Schwarz to the square and  change the sums back. Hence, we have two $p$-sums. If the summands of both $p$-sums coincide, we trivially estimate,  otherwise we apply the Weil bound. Since \cite[Lemma 7]{Iw1} does not hold, we cannot use $(n,r) \leq r^{1/2}$ for \cite[(6.1)]{Iw1}. Instead, we use $(n,N) \leq (n,r) \leq r$ and $(6.3)$ from Iwaniec changes to 
\begin{align}   \label{eq:40}
F_P(A,B) &\ll y r^{-1} P^{-1/2} + \Kl{1+\frac{n}{y}}^{1/2} (s^2-1,r)^{1/2} \tau(r)\, \text{log}\,y \\ \notag
&\Kl{ y^{7/8}r^{-5/8}P^{3/8} (n,N)^{-1/4} + (A^{-1/2} + B^{-1/2}) y r^{-1}}. 
\end{align}
In particular, we lose a factor of $r^{-1/4}$ in the second term within the bracket. To bound $K_Q(y)$, we modify \cite[Section 7]{Iw1} accordingly and apply \eqref{eq:3.11} and \eqref{eq:3.12} in case that either $A$ or $B$ is 
\begin{align} \notag
\leq \Kl{1+ \frac{n}{y}}^{-1/4} n^{-1/4} r^{-3/4} y^{1/2} P^{-1/2} (n,N)^{1/2}
\end{align}
respectively and \eqref{eq:40} otherwise and obtain
\begin{align}   \notag 
\sum_{p \in \mathcal{P}} \abs{F_{r,s}^{\pm}(A,B;p)} \ll &y r^{-1} P^{-1/2} + \Kl{y + n}^{5/8} r^{-5/8} (n,N)^{-1/4}
\\ \notag &(s^2-1,r)^{1/2} \tau^2(r) \,(\text{log}\,ny)  \Kl{ y^{1/4} P^{3/8} + n^{1/8}y^{1/8}P^{1/4}}.
\end{align}
According to \eqref{3.40}, it remains to sum this inequality over $s~(\text{mod}~r/2)$ and $r \in \mathfrak{R}$. The more general form of $f_r(2s,\chi)$ does not affect \cite[(7.2) and (7.3)]{Iw1}. Hence,
\begin{align} \notag
\sum_{s(\text{mod}\,r/2)} |f_r(2s,\chi)| (s^2-1,r)^{1/2}  \ll r \tau^2(r) \text{ and } \sum_{r \in \mathfrak{R}} r^{-1/8} \tau^4(r) \ll \tau(nN) N^{-1/8}. 
\end{align} 
Combining this with \eqref{eq:1}, we conclude, for quadratic $\chi$, that 
\begin{align}  \label{eq:17}
\sum_{Q \in \mathcal{Q}} \abs{K_{Q}(x)}  &\ll  \Big(  xv (n,N) n^{-1/2} N^{-1} + x P^{-1/2} N^{-1/2} \\ \notag
&+ (x+n)^{5/8} \big(x^{1/4}P^{3/8} + n^{1/8} x^{1/8} P^{1/4}\big) N^{-1/8} (n,N)^{-1/4} \Big) (nxN)^{\eps}
\end{align}
which is an improvement  of \cite[Theorem 3]{Iw1}. By \eqref{eq:008}, we infer 
\begin{align} \notag
\sum_j^d \abs{a_j(n)}^2 \ll  n^{k/2-1} \bigg(  \frac{v (n,N)}{N} + P + \frac{n^{1/2}}{P^{1/2} N^{1/2}}  + \frac{n^{3/8} P^{3/8}}{N^{1/8} (n,N)^{1/4}} \bigg) (nNP)^{\eps}.
\end{align}
Choosing $P = n^{1/7} (n,N)^{2/7} / N^{3/7} + (nN)^{\eps}$ yields, for real $\chi$, that 
\begin{align}  \notag
\sum_j^d \abs{a_j(n)}^2 \ll  n^{k/2-1}  \bigg(  \frac{v (n,N)}{N} + 1 + \frac{n^{3/7}}{(n,N)^{1/7} N^{2/7}} + \frac{n^{3/8}}{N^{1/8} (n,N)^{1/4}}  \bigg)(nN)^{\eps},
\end{align}
while, for an arbitrary character $\chi$, the first term changes to
$
\frac{v (n,N)}{N} (c_\chi \text{rad}(c_\chi))^{1/4}. 
$
This  concludes the proof for $k \geq 5$.  

To prove the case $k=3$ we follow \cite[Section 3 \& 5]{Du}, but  include an arbitrary nebentypus $\chi$. The map $f(z) \mapsto y^{3/4} f(z)$ induces an injective mapping  $S_{3/2}(N,\chi) \mapsto C_{3/4}(N,3/2,\chi)$ and one has $a(n)=(4\pi n)^{3/4} \rho(n)$, where $a(n)$ denote the Fourier coefficients of $f$ and $\rho(n)$ the  coefficients, see \eqref{eq:2.2}, of the corresponding Maa{\ss} cusp form. Let $u_i(z)$ be an orthonormal basis of Maa{\ss} cusp forms of weight $3/2$ with eigenvalues $\lambda_j$  and Fourier coefficients $\rho_j(n)$ and let $\{f_{ij}= \sum_{n \geq 1} a_{ij}(n) e(nz)\}_{i=1}^{d_j} $ be an orthonormal basis of  $S_{3/2+2j}(N,\chi)$. Then it holds, by  Proskurin's variant \cite[p. 3888]{Pr} of the Kuznetzov formula, that 
\begin{align}  \label{eq:8}
\sum_{N \divides c}  \frac{K_{\chi}^{1}(n,n;c)}{ c}  \varphi(4  \pi & n / c)   = 4n \sum_{\lambda_j >0} \frac{\abs{\rho_j(n)}^{2}}{\text{cosh}(\pi t_j)} \hat{\varphi}(t_j)  \\ \notag
 &+ \sum_{\A} \int_{-\infty}^{\infty} \frac{\abs{\phi_{\A,n}(1/2+it))}^{2}}{\text{cosh}(\pi t) \abs{\Gamma(1/2+3/4 +it}^{2}} \hat{\varphi}(t) dt  \\ \notag
&+ 4 \sum_{j \geq 1} \frac{\Gamma(3/2+2j) e(3/8+j/2) \tilde{\varphi}(3/2+2j)}{(4 \pi)^{3/2+2j} n^{1/2+2j}} \sum_{i=1}^{d_j} \abs{a_{ij}(n)}^{2}.
\end{align}
Here, $\varphi(x)$ is a suitable test function, $\sum_{\A}$ refers to the summation over the nonequivalent non-singular cusps of $\Gamma_0(N)$, $t_j$ is defined by $s_j = 1/2 + i t_j$ and $\phi_{\A,n}$ are the coefficients of an Eisenstein series (cf. \cite[p. 3876]{Pr}). Similar to the choice in \cite[p.\,51]{DS}, we set $\varphi(x) = c_0 x^{-7/2} J_{13/2}(x)$ for $c_0= - 2^{4}  e(-3/8) \pi^{-2} \Gamma(9/2)^{-1}$ and $J_k(z)$ to denote the Bessel function of order $k$. This choice fulfills all requirements for the Kuznetsov formula and by means of the Weber-Schafheitlin integral \cite[(6.574.2)]{GR} it is straightforward to calculate 
\begin{align}  \notag
\hat{\varphi}(t) =  \frac{t^2 + 1/4} { \text{cosh}(2 \pi t)\Gamma(-1/4+it) \Gamma(-1/4-it) \Gamma\Kl{6+it}\Gamma\Kl{6-it }}. 
\end{align}
Observe that  $\hat{\varphi}(t) >0$ for $t \in \R$ and for $t \in [-i/4  ,i/4]$,  the value at $it=1/4$ defined by 
\begin{align} \notag
\lim\limits_{t \to \pm i /4} \hat{\varphi}(t) =  \frac{3}{64 \pi^{3/2} \Gamma(23/4) \Gamma(25/4)}.
\end{align}
Thus, we may drop all terms of the first sum on the right-hand side of \eqref{eq:8} which represent eigenvalues distinct to $3/16$ as well as the contribution from the continuous spectrum (the integral over the Eisenstein coefficients). Since the weights of $f_{ij}$ are greater than or equal to $5/2$, we can use our previous results to bound the last term of \eqref{eq:8}. As before, we apply Iwaniec's method of averaging over the levels. If $u(z)$ is a normalized Maa{\ss} cusp form for $\Gamma_0(N)$, then $[\Gamma_0(Q):\Gamma_0(N)]^{-1/2} u(z)$ is a normalized Maa{\ss} cusp form for $\Gamma_0(Q)$, $Q \in \mathcal{Q}$. Hence, by applying the Kuznetsov formula  for every level $Q \in \mathcal{Q}$,  it follows 
\begin{align}  \label{eq:15}
n \sum_{\lambda_j=3/16} \abs{p_j(n)}^{2} \ll \text{log} P  \sum_{Q \in \mathcal{Q}}  \bigg| \sum_{Q \divides c} \frac{K_{\chi}^{1}(n,n,c)}{ c} \Kl{\frac{c}{n}}^{7/2} J_{13/2}\Kl{\frac{4 \pi n}{c}} \bigg|  \\ \notag +  \Kl{P+ \frac{n^{1/2}}{P^{1/2} N^{1/2}} + \frac{v (n,N)}{N} + \frac{n^{3/8} P^{3/8}}{N^{1/8} (n,N)^{1/4}}}  (nNP)^{\eps}.
\end{align}
Since $13/2$ is half integral and since for $x>n$ 
\begin{align} \notag
n^{-7/2} \Kl{x^3 J_{13/2}\Kl{\frac{4 \pi n}{x}}}' \ll n x^{-5/2},
\end{align}
the right-hand side of \eqref{eq:15} can be treated  exactly as in \cite[Section 8]{Iw1} taking into account \eqref{eq:17} and our choice of $P$. This concludes the proof of  Theorem \ref{main1}.

\section{An Application}

Finally, we give an application of Theorem \ref{main1}, particularly an improvement of \cite[Theorem 1.2]{Wo}. For this purpose, let $A$ be a positive, integral, symmetric $k \times k$ matrix with even diagonal elements,  let $q(x) := \frac{1}{2} x^{t} A x$ be the corresponding quadratic form and let $N$ be the level of $A$, i.e., the smallest integer such that $N A^{-1}$ is integral with even diagonal. This section aims at finding a lower bound for the Fourier coefficients $r(q,n) = \# \{x \in \Z^{k} |\, q(x)=n \}$ of $\theta(q,z)$ to conclude that $n$ is represented by $q$. By direct computation, one can show that $\theta(q,z) \in M_{k/2}(N, \chi_{(-1)^{k} \text{det}A} )$  \cite[p. 456]{Sh}. %Accordingly, $\theta(q,z)$ decomposes uniquely into an Eisenstein series and a cusp form. To study this decomposition further, one makes use of the spinor genus theory from \cite{SP}. 

Two positive quadratic forms are in the same genus if they are equivalent over all $\Z_p$. Define the theta series of the genus $\theta(\text{gen}\,q,z) = \sum_{n = 0}^{\infty}  r(\text{gen}\,q,n) e(nz)$ by 
\begin{align}  \label{eq:5.50}
r(\text{gen}\,q,n) = \sum_{\tilde{q}\in \text{gen}\,q} w(\tilde{q}) r(\tilde{q},n) \text{ with }w(\tilde{q}) = \Kl{ \sum_{\tilde{q} \in \text{gen}\,q}
\frac{1}{\# O_{\Z}(\tilde {q})}}^{-1} \frac{1}{\# O_{\Z}(\tilde {q})},
\end{align}
where the summation is taken over a set of representative classes in the genus.  Let $S(z) = \theta(q,z) - \theta(\text{gen}\,q,z)$. Then $S(z)$ is the orthogonal projection of $\theta(q,z)$ onto the subspace of cusp forms and  $\theta(\text{gen}\,q,z)$ is an Eisenstein series \cite[Korollar 1]{SP}. Consequently, write 
\begin{align} \notag
\theta(q,z) = \theta(\text{gen}\,q,z) + S(z) =: \sum_{n=0}^{\infty} r(\text{gen}\,q,n)  e(nz) + \sum_{n=1}^{\infty} a(q,n) e(nz). 
\end{align}
We would like to treat $r(\text{gen}\,q,n)$  as the main term for $r(q,n)$ and $a(q,n)$ as the error term. To compute the Eisenstein coefficients $r(\text{gen}\,q,n) $, we use  Siegel's formula \cite{Si}. From now on, let $k=3$. Then
\begin{align}  \label{eq:5.1}
r(\text{gen}\,q,n) = \frac{2\pi}{\sqrt{\Delta / 8}} n^{1/2} \prod_p r_p(q,n),
\end{align}
where $\Delta$ is  the determinant of $A$ and $r_p(q,n)$ are the $p$-adic densities defined by 
\begin{align} \notag
r_p(q,n):= \lim\limits_{\nu \to \infty} \frac{1}{p^{2\nu}} \# \{x \in (\Z/p^{\nu}\Z)^3~ |~ q(x) \equiv n ~(\text{mod\,}p^{\nu })~ \}.
\end{align}
Apart from a finite number of cases, $(p,Nn) \neq 1$, the densities are easy to compute \cite[Hilfssatz 12]{Si}    
\begin{align} \notag
r_p(q,n) = 1+ \frac{\chi_{-2n\Delta}}{p}, \quad p \nmid nN.
\end{align} 
%Note that here $4 \divides N$ since we consider half-integral weights. 
The space of theta functions $U$ poses a problem since their Fourier coefficients grow like $\asymp n^{1/2}$ which is roughly the same size as $r(\text{gen}\,q,n)$. Thus, to show that $n$ can be represented by a quadratic form $q$  using Theorem 1, it is necessary that the $n$-th coefficient of the projection of $\theta(q,z)$ onto $U$ vanishes. 

For a ring $R$ let $O_{R}(q) := \{ S \in GL_{2k}(R) | S^{t}AS = A \}$ be the finite set of $R$-automorphs of $q$. Two quadratic forms $A_1,A_2$ in the same genus with $A_1 = S^{t} A_2 S$ for  $S \in GL_{k}(\Z)$ belong to the same spinor genus, if $S \in O_{Q}(A_2) \bigcap_{p} O'_{Q_p}(A_2)GL_{k}(\Z_p)$, where $O'_{Q_p}(A)$ is the subgroup of $p$-adic automorphs $O_{Q_p}(A)$ of determinant and spinor norm 1  (cf. \cite[Section 55]{Me}). % which is exactly the commutator subgroup of $O_{Q_p}(A)$. 
Define the theta  series of the spinor genus $\theta(\text{spn}\,q,z) = \sum_{n = 0}^{\infty}  r(\text{spn}\,q,n) e(nz)$ by 
\begin{align}  \label{eq:5.60} 
r(\text{spn}\,q,n) = \sum_{\tilde{q} \in \text{spn}\,q} w(\tilde{q}) r(\tilde{q},n) \text{ with }w(\tilde{q}) = \Kl{ \sum_{\tilde{q} \in \text{spn}\,q} \frac{1}{\# O_{\Z}(\tilde {q})}}^{-1} \frac{1}{\# O_{\Z}(\tilde {q})},
\end{align}
where the summation is taken over a set of representative classes in the spinor genus of $q$.  Schulze-Pillot \cite{SP} has shown that the orthogonal projection of $\theta(q,z)$ onto the subspace of $U^{\bot}$ is $\theta(q,z)-\theta(\text{spn}\,q,z)$. Therefore,  write
\begin{align} \notag
\theta(q,z) = \theta(\text{gen}\,q,z) + H(z) + f(z), 
\end{align} 
with $H(z)= \theta(\text{spn}\,q,z) - \theta(\text{gen}\,q,z)\in U$ and $f \in  U^{\bot}$. The contribution from the Fourier coefficients of $f$ is easy to handle by Theorem \ref{main1}. If $r(\text{gen}\,q,n) = r(\text{spn}\,q,n)$, then the $n$-the Fourier coefficient of $H(z)$ vanishes. This obviously holds when  $n \notin \{ tm^2: 4t \divides N, m \in \N \}$ since  the coefficients of the theta functions vanish. According to the definitions \eqref{eq:5.50} and \eqref{eq:5.60} it follows that  $r(\text{spn}\,q,  n) = r(\text{gen}\,q, n)$ is satisfied if 
\begin{align} \notag  
r(\text{spn}\,q, n) = r(\text{spn}\,q', n)
\end{align}
for all $q'$ in the same genus as $q$. According to Schulze-Pillot \cite[Korollar 2.3 (ii)]{SP} it holds for any  $q,q'$ in the same genus and squarefree $t$ that
\begin{align}  \notag 
r(\text{spn}\,q, tm^2) = r(\text{spn}\,q', tm^2)
\end{align} 
if $N= 4 t\, t'\, h^2$ with squarefree $t'$ and $h \divides m$. In particular, if $N/4$ is squarefree, one has $\theta(\text{gen}\,q,z) = \theta(\text{spn}\,q,z)$. 

%If one would assume that the Ramanujan hypothesis is fulfilled, i.e. $a(n) \ll n^{1/4}$ for $f(z) = \sum_{n \geq 1} a(n)e(nz) \in U^{\bot}$, then one would only require the bound $n > P^{5+\delta}$. In our proof we do not need the requirement from Golubeva that $n = x^2 + y^2 + 6 P z^2 \equiv n \text{ mod }16$ is solvable and we modify the conclusion such that the solution $xyz \neq 0$. 

\begin{proof}[Proof of Theorem \ref{Golubeva}] Let $\theta(q,z)$ be the theta series of the quadratic form $q=x^2 + y^2 + 6 P z^2$. Then, $\theta(q,z) \in M_{3/2}(24P, \chi)$ for a quadratic character $\chi$ and since $6P$ is squarefree, it holds that $\theta(\text{gen}\,q,z) = \theta(\text{spn}\,q,z)$. Thus, the orthogonal projection of $\theta(q,z)$ onto the subspace of cusp forms is in $U^{\bot}$. Let $\{\varphi_j(z)= \sum_{n \geq1 } a_j(n) e(nz) \}_{j=1}^{d}$ be  an orthonormal basis of $U^{\bot}$. Then
\begin{align}  \notag
r(q,n) = r(\text{gen}\,q,n) + \sum_{j=1}^{d} c_j a_j(n) = r(\text{gen}\,q,n) + \mathcal{O}\Bigg( \sqrt{\sum_{j=1}^{d} c_{j}^{2}} \sqrt{\sum_{j=1}^{d} \abs{a_j(n)}^2}\Bigg).
\end{align}
From $\sqrt{\sum_{j=1}^{d} c_{j}^{2}} = \mathcal{O}(P^{1/4+\eps})$ (cf.  \cite[Theorem 3]{Go}) and Theorem \ref{main1},  we conclude that
\begin{align}  \label{eq:3} 
r(q,n) = r(\text{gen}\,q,n) + \mathcal{O}\Kl{v^{1/2}\Kl{ t^{13/28}P^{3/28}+t^{7/16} P^{3/16}+ t^{1/4}P^{1/4}}}(Pn)^{\epsilon}.
\end{align} 
To  bound  $r(\text{gen}\,q,n)$ from below, we apply \eqref{eq:5.1}, Siegel's formula. 
 %\begin{align}
%r(\text{gen}\,q,n) = \frac{2\pi}{\sqrt{\Delta / 8}} n^{1/2} \prod_p r_p(q,n).
%\end{align}
If $p \nmid 6P$, it holds by \cite[Hilfssatz 16]{Si} that
\begin{align} \notag
1- \frac{1}{p} \leq r_p(q,n) \leq 1 + \frac{1}{p}.
\end{align}
To treat the the remaining densities, $r_2(n,q),r_3(n,q)$ and $r_P(n,q)$, we rely on Hensel's lemma %(cf. also \cite[Section 15]{Kn}).

\begin{lemma} \label{Hensel} Assume that $P \in \Z[x_1,\ldots,x_d]$ and $\alpha \in \Z^d$  satisfy $P(\alpha) \equiv 0 \rm{~mod~} p^{k}$. If  it holds for at least one $x_j$ that
\notag
\begin{align}
\frac{\partial f}{\partial x_j}(\al) \neq 0 \rm{~mod~}p^{l} \text{ for some } l \leq \frac{k+1}{2}, 
\end{align}
then $P(x) \equiv 0 \rm{~mod~} p^{k+m}$ has $p^{m(d-1)}$ integer solutions. Each of these solutions $\be$ satisfies that $ \be_j \equiv \alpha_j \operatorname{mod} p^{k-l+1}$ and $\be_i \equiv \alpha_i \operatorname{mod} p^{k}$ \text{ for all } $i \neq j$. 
\end{lemma}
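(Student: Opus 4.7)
The strategy is to reduce the multivariate congruence to the classical one-variable strong Hensel lifting by freezing $d-1$ of the coordinates. Relabelling if necessary, take $j$ to be an index for which $v_p(\partial_j P(\alpha)) = l-1$; the hypothesis $l \leq (k+1)/2$ then reads $k \geq 2l-1$, which is the crucial inequality driving the lift.

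First I would parametrize the candidate solutions. For each tuple $(\gamma_i)_{i \neq j} \in (\Z/p^m\Z)^{d-1}$, set $\beta_i := \alpha_i + p^k \gamma_i$ for $i \neq j$, giving $p^{m(d-1)}$ choices, each satisfying $\beta_i \equiv \alpha_i \pmod{p^k}$. The plan is to show that each such choice extends to a $\beta_j$ with $\beta_j \equiv \alpha_j \pmod{p^{k-l+1}}$ and $P(\beta) \equiv 0 \pmod{p^{k+m}}$, which then yields exactly the claimed count.

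For fixed $(\beta_i)_{i \neq j}$, introduce the univariate polynomial $Q(t) := P(\beta_1,\ldots,\beta_{j-1},t,\beta_{j+1},\ldots,\beta_d)$. Taylor-expanding $P$ in the frozen variables around $\alpha$ and using $\beta_i - \alpha_i \equiv 0 \pmod{p^k}$ shows $Q(\alpha_j) \equiv P(\alpha) \equiv 0 \pmod{p^k}$ and $Q'(\alpha_j) \equiv \partial_j P(\alpha) \pmod{p^k}$, so in particular $v_p(Q'(\alpha_j)) = l-1$. Then I would apply the classical Hensel lifting to $Q$: inductively construct $t_0 = \alpha_j, t_1, \ldots, t_m$ by $t_{r+1} = t_r + p^{k-l+1+r} s_r$, where $s_r \in \Z/p\Z$ is uniquely determined so that $Q(t_{r+1}) \equiv 0 \pmod{p^{k+r+1}}$. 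Since $t_r \equiv \alpha_j \pmod{p^{k-l+1}}$ and $k-l+1 \geq l$, the valuation $v_p(Q'(t_r)) = l-1$ is preserved at every step. Setting $\beta_j := t_m$ produces the desired lift.

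The main technical point is to verify the induction step under the borderline hypothesis $l \leq (k+1)/2$, which permits the equality $k = 2l-1$. The quadratic Taylor remainder in $Q(t_r + p^{k-l+1+r}s)$ has valuation at least $2(k-l+1+r)$, and I need $2(k-l+1+r) \geq k+r+1$ for all $r \geq 0$ so that this error is absorbed into the congruence at step $r+1$; this rearranges to exactly $k \geq 2l-1$ and so holds by assumption. The remaining bookkeeping — that each $s_r$ is forced by a linear congruence mod $p$ with unit coefficient $u_r := p^{-(l-1)} Q'(t_r)$ — is routine, and combining the $p^{m(d-1)}$ parameter choices with the lift $\beta_j$ obtained for each gives the claim.
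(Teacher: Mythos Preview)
Your proof is correct and follows exactly the same route as the paper: freeze the $d-1$ coordinates $i\neq j$ in all $p^{m(d-1)}$ ways congruent to $\alpha_i$ modulo $p^k$, then invoke the one-variable strong Hensel lemma on the resulting univariate polynomial to produce $\beta_j$. The paper merely cites the one-variable case from a reference, whereas you spell out the inductive lift and verify the valuation bookkeeping; one minor point is that the hypothesis only gives $v_p(\partial_j P(\alpha))\le l-1$, so you are implicitly replacing $l$ by $v_p(\partial_j P(\alpha))+1$, which is harmless since it only sharpens the congruence on $\beta_j$.
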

\begin{proof} The case $d=1$ is proven in \cite[p.\,48]{Ro}. Assume $j=1$. For each choice $\be_2,\ldots,\be_d$ mod $p^{k+m}$ with $\be_i \equiv \al_1$ mod $p^k$, we can apply the one-variable case  to find $\be_1$ such that  $ P(\be) \equiv 0 \text{ mod } p^{k+m}$. 
\end{proof}
For $p=2$, consider the congruence
\begin{align}  \label{eq:0.0}
x^2 + y^2 + 6 P z^2 \equiv n \text{ mod } 8
\end{align}
for arbitrary odd $n$. For each $x \equiv 1,3 \operatorname{mod}4 \ (y \equiv 1,3 \operatorname{mod}4)$, there are two possible choices for $y \operatorname{mod}8 \ (x \operatorname{mod}8)$ and four possibilities for $z \operatorname{mod} 8$ to solve \eqref{eq:0.0}. It follows by Lemma \ref{Hensel} that
\begin{align} \notag
r_2(n,q) \geq \lim\limits_{\nu \rightarrow \infty}{\frac{32 \cdot 2^{2 (\nu-3)}}{2^{2\nu} } } = 1/2. 
\end{align}
If $p$ is a prime, then $\Z \slash p\Z$ is a finite field. In a finite field of odd order $q$, every element  unequal to zero can be expressed as the sum of two squares in $q-1$ ways. Hence, for $n \nequiv 0 \text{ mod P}$, there exist $P^2 -P$ solutions of 
\begin{align}  \label{eq:0.1}
x^2 + y^2 + 6 P z^2 \equiv n \text{ mod } P,
\end{align}
with $(x,y) \nequiv 0 \text{ mod } P$. By Lemma \ref{Hensel} we infer $r_3(n,q) \geq 2/3$ and $r_P(n,q) \geq 1 - \frac{1}{P}$. It follows $r(\text{gen}\,q,n) \gg \frac{n^{1/2-\eps}} { P^{1/2}}. $ 
Thus, the main term of \eqref{eq:3} dominates the error term as soon as  
\begin{align} \notag
P \leq \text{min}(v^{14/17}t^{1/17}, v^{8/11} t^{1/11} ,v^{2/3}t^{1/3})^{1-\eps} .
\end{align}
If this holds true, it follows that $x^2+y^2+6Pz^2=n$ has a solution in $\Z^{3}$.  Furthermore, we may assume that $x,y,$ and $z$ are natural numbers since the number of integer solutions of $x^2+y^2=n$ is  $\mathcal{O} (n^{\eps})$.
\end{proof}
%To see a brief sketch how this result is related to Theorem 2, see \cite[Section 2]{Wo} and \cite[p. 544 f]{Go}. 
\begin{proof}[Proof of Theorem \ref{Wo}] We keep the notation from Wooley \cite[Section 3]{Wo} and modify only the parts concerning the bound of Golubeva's theorem. The necessary requirements to apply Theorem \ref{Golubeva}, $(i)\, NM^{12} > p^{17}, (ii)\,NM^6 > p^{11}$ and $(iii)\, N>p^{3}$, are fulfilled provided that (cf. \cite[p.\,14]{Wo})
\begin{enumerate}[label=$(\roman*)$]
\item  $\gamma_0(6/c+1) - 4/c - \eps > 17 \gamma_0 - 34/3 + \eps,$
\item  $\gamma_0(3/c+1) - 2/c - \eps > 11 \gamma_0 - 22/3 + \eps$ and
\item  $\gamma_0 - \eps > 3 \gamma_0 - 2 + \eps. $
\end{enumerate}
These inequalities yield the following conditions
\begin{align} \notag
(i) ~\gamma_0 < \frac{34 c -12  -6 c \eps}{48c-18}, ~ (ii)~ \gamma_0 < \frac{22c - 6 -6 c \eps}{30c-9} \text{ and } (iii)~ \gamma_0 <1 - 2 \eps. 
\end{align}
Assuming the Riemann hypothesis, Wooley chooses $c=2+2\eps$ (cf. \cite[p.15]{Wo}). With this choice and $\eps$ sufficiently small, the conditions are satisfied as long as $\gamma_0 < 28/39= \rm{min}(28/39,38/51,1)$.  Otherwise, without assuming the Riemann hypothesis, the choice is $c=\frac{12}{5}+2 \eps$, and it follows $\gamma_0 < 58/81 = \rm{min}(58/81,26/35,1)$. The rest of the proof can be conducted exactly as in \cite[Section 3]{Wo}. 
\end{proof}
\bibliographystyle{amsplain}
\bibliography{mybib}

\providecommand{\bysame}{\leavevmode\hbox to3em{\hrulefill}\thinspace}
\providecommand{\MR}{\relax\ifhmode\unskip\space\fi MR }
% \MRhref is called by the amsart/book/proc definition of \MR.
\providecommand{\MRhref}[2]{%
  \href{http://www.ams.org/mathscinet-getitem?mr=#1}{#2}
}
\providecommand{\href}[2]{#2}
\begin{thebibliography}{10}

\bibitem{Bl1}
V.~Blomer, \emph{Uniform bounds for fourier coefficients of theta-series with
  arithmetic applications}, Acta Arith. \textbf{114} (2004), 1--21.

\bibitem{Du}
W.~Duke, \emph{Hyperbolic distribution problems and half-integral weight maass
  forms}, Invent. Math. \textbf{92} (1988), 73--90.

\bibitem{DS}
W.~Duke and R.~Schulze-Pillot, \emph{Representation of integers by positive
  ternary quadratic forms and equidistribution of lattice points on
  ellipsoids}, Invent. Math. \textbf{99} (1990), 49--57.

\bibitem{Go}
E.P. Golubeva, \emph{A bound for the representability of large numbers by
  ternary quadratic forms and nonhomogenous waring equations}, J. Math. Sci.
  \textbf{157} (2009), no.~4, 543--552.

\bibitem{GR}
I.~S. Gradshteyn and I.~M. Ryzhik, \emph{Table of integrals, series, and
  products}, 5 ed., Academic Press, 1994.

\bibitem{Iw1}
H.~Iwaniec, \emph{Fourier coefficients of modular forms of half-integral
  weight}, Invent. Math. \textbf{87} (1987), 385--401.

\bibitem{Iw2}
H.~Iwaniec and E.~Kowalski, \emph{Analytic number theory}, Colloq. Publ.,
  vol.~53, Amer. Math. Soc., 2004.

\bibitem{KL}
A.~Knightly and C.~Li, \emph{Kuznetsov's trace formula and the hecke
  eigenvalues of maass forms}, Mem., vol. 224, Amer. Math. Soc., 2013.

\bibitem{Ko}
N.~Koblitz, \emph{Introduction to elliptic curves and modular forms}, Grad.
  Texts in Math., vol.~97, Springer, 1993.

\bibitem{Ob}
M.~Oberhettinger, \emph{Formulas and theorems for the special functions of
  mathematical physics}, Springer, 1966.

\bibitem{Me}
O.T. O'Meara, \emph{Introduction to quadratic forms}, Grundlehren Math. Wiss.,
  vol. 117, Springer, New York, 1973.

\bibitem{Pr}
N.V. Proskurin, \emph{On the general kloosterman sums}, J. Math. Sci.
  \textbf{129} (2005), no.~3, 3874--3889.

\bibitem{Ro}
A.M. Robert, \emph{A course in p-adic analysis}, Grad. Texts in Math., vol.
  198, Springer, 2000.

\bibitem{Sar}
P.~Sarnak, \emph{Some applications of modular forms}, Cambridge Tracts in
  Math., vol.~99, Cambridge University Press, 1990.

\bibitem{SP}
R.~Schulze-Pillot, \emph{Thetareihen positiv definiter quadratischer formen},
  Invent. Math. \textbf{75} (1984), 283--299.

\bibitem{Se}
A.~Selberg, \emph{Harmonic analysis and discontinuous groups in weakly
  symmetric riemannian spaces with applications to dirichlet series}, J. Indian
  Math. Soc. \textbf{20} (1956), 47--87.

\bibitem{Sh}
G.~Shimura, \emph{On modular forms of half integral weight}, Ann. of Math. (2)
  \textbf{97} (1973), 440--481.

\bibitem{Si}
C.L. Siegel, \emph{{\"U}ber die analytische theorie der quadratischen formen},
  Ann. of Math. (2) \textbf{38} (1935), 212--291.

\bibitem{Wo}
T.D. Wooley, \emph{On waring's problem: some consequences of golubeva's
  method}, J. London Math. Soc. \textbf{88} (2013), 699--715.

\end{thebibliography}
%\nocite{•}
%\printbibliography
\end{document}